\newtheorem{thm}{Theorem}[section]
\newtheorem{con}[thm]{Conjecture}
\newtheorem{lem}[thm]{Lemma}
\newtheorem{prop}[thm]{Proposition}
\newtheorem{defn}[thm]{Definition}
\newtheorem{rem}[thm]{Remark}
\newtheorem{qu}[thm]{Question}
\numberwithin{equation}{section}
\DeclareMathOperator{\cay}{Cay}
\DeclareMathOperator{\mat}{Mat}
\DeclareMathOperator{\GL}{GL}
\DeclareMathOperator{\Stab}{Stab}
\begin{document}
\title[Permutation Codes under  Kendall $\tau$-Metric]{New Upper Bounds on the Size of  Permutation Codes under Kendall $\tau$-Metric}
\author[Abdollahi]{Alireza Abdollahi}%
\address{Department of Pure Mathematics, Faculty of Mathematics and Statistics,  University of Isfahan, Isfahan 81746-73441, Iran.}%
\address{School of Mathematics, Institute for Research in Fundamental Sciences (IPM), 19395-5746 Tehran, Iran.}
\email{a.abdollahi@math.ui.ac.ir}%
\author[Bagherian]{Javad Bagherian}
\address{Department of Pure Mathematics, Faculty of Mathematics and Statistics,  University of Isfahan, Isfahan 81746-73441, Iran.}%
\email{bagherian@sci.ui.ac.ir}
\author[Jafari]{Fatemeh Jafari}
\address{Department of Pure Mathematics, Faculty of Mathematics and Statistics,  University of Isfahan, Isfahan 81746-73441, Iran.}%
\email{math\_fateme@yahoo.com}
\author[Khatami]{Maryam Khatami}%
\address{Department of Pure Mathematics, Faculty of Mathematics and Statistics,  University of Isfahan, Isfahan 81746-73441, Iran.}%
\email{m.khatami@sci.ui.ac.ir}
\author[Parvaresh]{Farzad Parvaresh}
\address{Department of Electrical Engeenering,  University of Isfahan, Isfahan 81746-73441, Iran.}%
\address{School of Mathematics, Institute for Research in Fundamental Sciences (IPM), 19395-5746 Tehran, Iran.}
\email{f.parvaresh@eng.ui.ac.ir}
\author[Sobhani]{Reza Sobhani}%
\address{Department of Applied Mathematics and Computer Science, Faculty of Mathematics and Statistics,  University of Isfahan, Isfahan 81746-73441, Iran.}%
\email{r.sobhani@sci.ui.ac.ir}
\thanks{Corresponding Author: A. Abdollahi (a.abdollahi@math.ui.ac.ir)}
\thanks{F. Parvaresh is supported by IPM in part by grant No. 1401680050.}
\subjclass[2020]{94B25;  94B65; 68P30}
\keywords{Rank modulation,  Kendall $\tau$-Metric, permutation codes.}

\begin{abstract}
We first give two methods based on the representation theory of symmetric groups to study the largest size $P(n,d)$ of  permutation codes of length $n$ i.e. subsets of the set $S_n$ all permutations on $\{1,\dots,n\}$ with the minimum distance (at least) $d$ under the Kendall $\tau$-metric. The first method is an integer programming problem obtained from the transitive actions of $S_n$. The second method can be applied to refute the existence of perfect codes in $S_n$.\\    
Here we reduce the known upper bound $(n-1)!-1$ for $P(n,3)$ to $(n-1)!-\lceil\frac{n}{3}\rceil+2\leq (n-1)!-2$, whenever $n\geq 11$ is any prime number. If $n=6$, $7$, $11$, $13$, $14$, $15$, $17$,   the known upper bound for $P(n,3)$ is decreased by $3,3,9,11,1,1,4$,  respectively.
\end{abstract}
\maketitle
\section{Introduction}
Rank modulation was proposed as a solution to the
challenges posed by flash memory storages \cite{jiang}. In the rank
modulation framework, codes are permutation codes, where by a Permutation Code (PC) of length $n$ we simply mean a non-empty subset of $S_n$, the set of all permutations of $[n]:=\{1,2,\ldots ,n\}$.
 Given a permutation $\pi:=[\pi(1),\pi(2),\ldots ,\pi(i),\pi(i+1),\ldots ,\pi(n)]\in S_n$,  an adjacent transposition, $(i, i + 1)$, for some $1\leq i\leq n-1$,  applied to $\pi$ will result in the permutation $[\pi(1),\pi(2),\ldots ,\pi(i+1),\pi(i),\ldots ,\pi(n)]$. For two permutations
$\rho,\pi\in S_n$, the Kendall $\tau$-distance between $\rho$ and $\pi$,
$d_K(\rho, \pi)$, is defined as the minimum number of adjacent
transpositions needed to write   $\rho \pi^{-1}$ as their product. Under the Kendall $\tau$-metric a PC of length $n$ with minimum distance $d$ can correct up to $ \frac{d-1}{2} $ errors caused by charge-constrained errors  \cite{jiang}.

 The maximum size of a PC of length $n$ and   minimum Kendall $\tau$-distance  \textit{at least}  $d$ is denoted by $P(n,d)$ and a PC attaining this size is said to be optimal. We will show in Proposition \ref{atleast}, below, that  if $d$ is such that optimal PCs of minimum Kendall $\tau$-distance at least $d$ exist, then the ones with the minimum distance  exactly $d$ exist and so one can drop the condition ``at least" in the latter definition for the minimum distance. Several researchers
have presented bounds on $P(n, d)$ (see \cite{barg,BE,jiang,V,WZYG,WWYF}), some of these results are shown in Table \ref{10101}. It is known that $P(n, 1)=n!$ and $P(n,2)=\frac{n!}{2}$. Also it is known that if $\frac{2}{3}\binom{n}{2}<d\leq \binom{n}{2} $, then $P(n,d)=2$ (see \cite[Theorem 10]{BE}). However, determining $P(n, d)$ turns out to be difficult for $3\leq d\leq \frac{2}{3}\binom{n}{2} $. In this paper, we study the upper bound of $P(n,3)$. By sphere packing bound (see \cite[Theorems 12 and 13]{jiang})  $P(n,3)\leq (n-1)!$. A PC  of size $(n-1)!$ and  with minimum Kendall $\tau$-distance 3 is called a 1-perfect code. It is proved that if $n>4$ is a prime number or $4\leq n\leq 10$, then there is no 1-perfect code in $S_n$ (see \cite[Corollary 2.5  and Theorem 2.6]{white} or \cite[Corollary 2]{BE}).

\begin{table}[!tbp]
	\begin{tabular}{|c|c|c|c|c|}
		\hline
		\cellcolor{yellow!50}{$n$}&\cellcolor{yellow!50}{6}&\cellcolor{yellow!50}{7} &\cellcolor{yellow!50}{11}&\cellcolor{yellow!50}{13}\\
		\hline
		Old UB& $5!-1^{a}$ & $6!-1^{a}$ & $10!-1^{a}$ & $12!-1^{a}$ \\
		\hline
		UB & \cellcolor{black!20!white}{$5!-4$}&\cellcolor{black!20!white}{$6!-4$}&\cellcolor{black!20!white}{$10!-10$}&\cellcolor{black!20!white}{$12!-12$}\\
		\hline
		\cellcolor{yellow!50}{$n$}&\cellcolor{yellow!50}{14}&\cellcolor{yellow!50}{15} &\cellcolor{yellow!50}{17}&\cellcolor{yellow!50}{prime  $n\geq 19$ }\\
		\hline
		Old UB& $13!$ \cite{jiang}&$14!$ \cite{jiang}&$ 16!-1^{a} $ &$(n-1)!-1^{a}$ \\
		\hline
		UB&\cellcolor{black!20!white}{$13!-1$}&\cellcolor{black!20!white}{$14!-1$}&\cellcolor{black!20!white}{$16!-5$}&\cellcolor{black!20!white}{$(n-1)!-\lceil\frac{n}{3}\rceil +2$}\\
		\hline
	\end{tabular}
	\caption{  {\small Some results on the upper bounds of $P(n,3)$.  The superscripts show the references from which the upper bound is taken, where ``a" is \cite{BE,white}, and  gray color shows our main results.}}\label{10101}
\end{table}

There are several works using optimization techniques to bound
the size of permutation codes under various distance metrics
(Hamming, Kendall $\tau$, Ulam) (see \cite{Go,tar,V}).  In  Section \ref{sec2}, we show that for any non-trivial subgroup of $S_n$, we can derive an integer programming problem where the optimal
value of the objective function  gives an upper bound on $P(n,3)$. In Section \ref{sec3}, by considering the integer programming problem corresponding to the Young  subgroups (see Definition \ref{young1}, below) of  $S_n$,
 we  prove the following result:
 \begin{thm}\label{mainresult}
For all primes $p\geq 11$,   $P(p,3)\leq (p-1)!-\lceil \frac{p}{3}\rceil+2\leq (p-1)!-2$.
 \end{thm}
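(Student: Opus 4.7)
My plan is to specialise the integer-programming framework of Section~\ref{sec2} to the Young subgroup $H = S_{p-1} = \Stab_{S_p}(p)$ and to extract the factor $\lceil p/3\rceil - 2$ by combining the resulting local inequalities with integrality and the primality of $p$. Let $C\subseteq S_p$ be a PC with minimum Kendall distance at least $3$ and $|C| = (p-1)! - k$; the aim is $k \geq \lceil p/3\rceil - 2$. The $p$ left cosets of $H$ are $C_i = \{\sigma \in S_p : \sigma(p)=i\}$, $i\in[p]$, each of size $(p-1)!$; set $x_i := |C\cap C_i|$ and $y_i := |\{\sigma\in C:\sigma(p-1)=i\}|$. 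For $\sigma\in C\cap C_i$ the radius-$1$ ball $\{\sigma, \sigma s_1,\ldots,\sigma s_{p-1}\}$ meets $C_i$ in exactly $p-1$ elements, since the only adjacent transposition that exits $C_i$ is $s_{p-1}$, which sends $\sigma$ into $C_{\sigma(p-1)}$. An external codeword $\tau$ contributes to $C_i$ via $\tau s_{p-1}$ precisely when $\tau(p-1)=i$. Pairwise disjointness of the radius-$1$ balls therefore yields the local inequality
\[
(p-1)x_i + y_i \;\leq\; (p-1)!, \qquad i \in [p],
\]
and writing $s_i := (p-1)! - (p-1)x_i - y_i \geq 0$, summation together with $\sum_i x_i = \sum_i y_i = |C|$ gives $\sum_i s_i = pk$, so $k\in\mathbb{Z}_{\geq 0}$ automatically.

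I would then invoke the full integer program $\mathrm{IP}(H)$ of Section~\ref{sec2}, which augments the above with: (i) integrality of all variables; (ii) $x_i \leq (p-2)!$ from $C\cap C_i$ being a distance-$3$ PC in $C_i \cong S_{p-1}$; (iii) joint constraints on the refined counts $V_{i,j} := |\{\sigma\in C : \sigma(p)=i,\, \sigma(p-1)=j\}|$ coming from the finer Young subgroup $S_{(p-2,1,1)}$, with $V_{i,i}=0$, $\sum_j V_{i,j} = x_i$ and $\sum_i V_{i,j} = y_j$; and (iv) tridiagonal couplings obtained by running the same analysis for the stabilisers of every other position. Together these produce an $S_p$-equivariant system of $O(p^2)$ inequalities whose continuous relaxation merely returns the sphere-packing bound $|C|\leq (p-1)!$, but whose integer feasibility is strictly tighter.

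Primality of $p$ enters in two essential ways. First, Wilson's theorem $(p-1)!\equiv -1\pmod p$ forbids congruences among the slacks $s_i$ that would otherwise permit $k<\lceil p/3\rceil - 2$. Second, the transitive action of $S_p$ on $[p]$ admits no proper non-trivial block system, so the $S_p$-equivariance of $V_{i,j}$ forces either total symmetry (which is incompatible with a proper code) or substantial irregularity in $(s_i)$. A careful bookkeeping of how these obstructions aggregate should give $\sum_i s_i \geq p(\lceil p/3\rceil - 2)$, hence $k\geq \lceil p/3\rceil - 2$; the second inequality in the statement then follows from $\lceil p/3\rceil \geq 4$ for $p\geq 11$.

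The main obstacle is this final counting step. The known non-existence of 1-perfect codes for prime $p>4$ already yields $k\geq 1$, so improving the bound by a factor of roughly $p/3$ is substantial and cannot come from the continuous LP alone. I expect the $3$ in $\lceil p/3\rceil$ to surface from the three families of length-$2$ elements in the Kendall Cayley graph of $S_p$ (commuting pairs $s_is_j$ with $|i-j|\geq 2$, and the two braids $s_is_{i+1}$ and $s_{i+1}s_i$), each imposing an independent constraint on the joint-coset counts $V_{i,j}$; turning this combinatorial intuition into a rigorous inequality, matched with the prime-specific arithmetic of $(p-1)!$ modulo $p$, is the crux of the proof.
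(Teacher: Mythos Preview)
Your proposal is not a proof: you explicitly flag the ``final counting step'' as an open obstacle and offer only a guess about where $\lceil p/3\rceil$ comes from. That guess (three families of length-$2$ words in the Cayley graph) is wrong, and the extra apparatus you plan to deploy---the refined counts $V_{i,j}$, the finer Young subgroup $(p-2,1,1)$, constraints from stabilisers of all positions, block-system considerations---is not needed. The paper uses only the single Young subgroup for the partition $(p-1,1)$ and the $p$ variables $x_i$.

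What you are missing is the \emph{tridiagonal} structure of the resulting system. With the paper's multiplication convention (so that right multiplication by $(j,j+1)$ swaps the values $j$ and $j+1$), the coset of $\sigma$ is determined by $\sigma(1)$, and row $i$ of the integer program reads $x_{i-1}+(p-2)x_i+x_{i+1}\le(p-1)!$ for $2\le i\le p-1$, with boundary rows $(p-1)x_1+x_2\le(p-1)!$ and $x_{p-1}+(p-1)x_p\le(p-1)!$. Your inequality $(p-1)x_i+y_i\le(p-1)!$ introduces a second family of variables and loses precisely this coupling between consecutive $x_i$'s. From the tridiagonal rows the argument runs in three short steps. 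First, partition $[p]$ into $\lceil p/3\rceil$ consecutive blocks of size $2$ or $3$; the row attached to each block forces the minimum $x_i$ in that block to satisfy $px_i\le(p-1)!$, so at least $\lceil p/3\rceil$ indices have $x_i\le(p-1)!/p$---that is the true source of the factor $\lceil p/3\rceil$. Second, if $\sum_i x_i=(p-1)!-k$ and $x_\ell=x_{\max}$, subtracting $\sum_i x_i$ from row $\ell$ gives $\sum_{i\ne\ell-1,\ell+1}(x_{\max}-x_i)\le k$, so at most $k+2$ indices have $x_i<x_{\max}$. Third, if $k<\lceil p/3\rceil-2$ these two index sets must overlap, whence $x_{\max}\le(p-1)!/p$; Wilson's theorem plus integrality then force every $x_i\le\frac{(p-1)!+1}{p}-1$, giving $\sum_i x_i\le(p-1)!+1-p$ and a contradiction. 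Primality enters only through Wilson's theorem; there is no appeal to block systems or to radius-$2$ balls.
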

\noindent We then use a software to solve the integer programming problems that are derived from specific choices of the underline subgroup and obtain tighter upper bounds for some small values of $n$. Finally, we apply a related method from
\cite{white} to prove the nonexistent of 1-perfect codes in $S_{14}$, $S_{15}$. 

\section{Preliminaries}\label{sec2}
A \emph{simple graph} $\Gamma$ consists of a non-empty set of vertices $V (\Gamma)$ and a possibly empty  set of edges $E(\Gamma)$ which is a subset of the set of all 2-element subsets of $V (\Gamma)$. Two vertices $\sigma_1$ and $\sigma_2$ are called adjacent, denoted by $ \sigma_1 -\sigma_2 $, if $\{\sigma_1,\sigma_2\} \in E(\Gamma)$. A \emph{subgraph}  $H$ of $\Gamma$ is a simple graph  whose vertex set and edge set are subsets of those of $\Gamma$. A \emph{path}  is a simple graph with the vertex set $\{\sigma_0, \sigma_1, \ldots , \sigma_n\}$ such that $\sigma_{j} - \sigma_{j+1}$ for $j = 0,\ldots, n-1$. The length of a path is the number of its edges. 

By a \emph{graphical code} of minimum distance at least $d$ we mean a subset of vertices of a simple graph such that any two distinct vertices has distance at least $d$, where the distance of two vertices is defined to be the shortest length of a  path between the vertices.
 Examples of such codes are permutation codes under Kendall $\tau$-metric  or Ulam metric. In fact the set of all permutations with the Kendall $\tau$ or Ulam
metrics can be represented as Cayley graph (see Definition \ref{cayley}, below) and  PCs are then subgraphs of the Cayley graph. The methods used in this paper rely on the fact that the permutation set with Kendall $\tau$-metric is a Cayley graph.

Here we observe that if $d$ is such that  graphical codes of minimum distance at least $d$ exist, then the ones with the minimum distance exactly $d$ exist. 

\begin{prop}\label{atleast}
Let $\Gamma$ be any simple graph and $d\geq 1$ an integer. Then $$\{|C| \;|\; C\subseteq V(\Gamma) \; \text{\rm and} \; d_\Gamma(C)=d\}=\{|C| \;|\; C\subseteq V(\Gamma) \; \text{\rm and} \; d_\Gamma(C)\geq d\},$$
where $d_\Gamma(C)=\min\{d_\Gamma(x,y)\;|\; x,y\in C \; \text{\rm and} \; x\neq y\}$. 
\end{prop}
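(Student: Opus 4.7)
The inclusion $\subseteq$ is immediate from the definition, so the whole content of the statement is the reverse inclusion. My plan is to show, given any $C\subseteq V(\Gamma)$ with $d_\Gamma(C)=m\geq d$, how to produce a subset $C'\subseteq V(\Gamma)$ of the same cardinality satisfying $d_\Gamma(C')=d$ by a one-element modification of $C$.

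If $m=d$ there is nothing to do, so I assume $m>d$. I would pick two vertices $x,y\in C$ attaining $d_\Gamma(x,y)=m$, choose a geodesic $x=v_0,v_1,\ldots,v_m=y$ between them, and consider $C':=(C\setminus\{y\})\cup\{v_d\}$. The first thing to verify is that $d_\Gamma(x,v_d)$ equals $d$ exactly: the subpath $v_0,v_1,\ldots,v_d$ gives $d_\Gamma(x,v_d)\leq d$, and any strictly shorter $x$-to-$v_d$ walk, concatenated with the tail $v_d,v_{d+1},\ldots,v_m$, would yield an $x$-to-$y$ walk of length less than $m$, contradicting $d_\Gamma(x,y)=m$. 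The second thing to verify is that every other $z\in C\setminus\{x,y\}$ satisfies $d_\Gamma(z,v_d)\geq d$; this follows from the triangle inequality
\[ d_\Gamma(z,v_d)\geq d_\Gamma(z,y)-d_\Gamma(v_d,y)\geq m-(m-d)=d, \]
using $d_\Gamma(z,y)\geq m$ because $z,y\in C$, and $d_\Gamma(v_d,y)\leq m-d$ via the tail of the geodesic.

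Putting these together, every pair of distinct elements of $C'$ is at distance at least $d$, the pair $\{x,v_d\}$ realises distance exactly $d$, and $v_d$ is distinct from every element of $C\setminus\{y\}$ because it is at distance at least $d\geq 1$ from each of them. Therefore $|C'|=|C|$ and $d_\Gamma(C')=d$, which gives the reverse inclusion.

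I do not expect any real obstacle: the argument is essentially bookkeeping of the triangle inequality along a geodesic. The only subtlety is that $x$ and $y$ must be joined by a finite-length path so that a geodesic exists; this is automatic when $\Gamma$ is connected, which is the setting relevant to the paper (the Cayley graph of $S_n$ under adjacent transpositions is connected), and the edge cases $|C|\leq 1$ are vacuous since then $d_\Gamma(C)$ ranges over an empty set of pair-distances.
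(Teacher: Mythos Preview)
Your proof is correct and follows essentially the same approach as the paper's: both pick a pair $x,y\in C$ realising the minimum distance $m>d$, choose a geodesic between them, replace one endpoint by the geodesic vertex at distance $d$ from the other, and verify via the triangle inequality that all remaining pairwise distances are at least $d$. The only cosmetic difference is which endpoint is removed (you delete $y$ and keep $x$ at distance $d$ from $v_d$; the paper deletes $\sigma$ and keeps $\tau$ at distance $d$ from $\sigma_\ell$), and your remark about the need for a finite geodesic is a fair point that the paper leaves implicit.
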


\begin{proof}
Let $C$ be a graphical code with the minimum distance at least $d$. Suppose that $\sigma,\tau \in C$ such that $d_\Gamma(C)=d_\Gamma(\sigma,\tau)=d+\ell$ for some non-negative integer $\ell$. If $\ell=0$, we are done; so from  now on assume that $\ell>0$. Let $\sigma-\sigma_1-\cdots-\sigma_\ell-\cdots-\sigma_{d+\ell-1}-\tau$ be a shortest path in the graph $\Gamma$ between $\sigma$ and $\tau$. Consider 
$\hat{C}=(C\setminus \{\sigma\}) \cup \{\sigma_\ell\}$. We claim that $|C|=|\hat{C}|$ and $d_\Gamma(\hat{C})=d$, this will complete the proof. 
If $\sigma_\ell \in C$, then $d(\sigma_\ell, \tau ) = d$, which implies  $\ell=0$, a contradiction. It follows that $|C|=|\hat{C}|$. To prove that $d_\Gamma(\hat{C})=d$, it is enough to show that $d_\Gamma(\delta,\sigma_\ell)\geq d$ for all $\delta\in C\setminus\{\sigma\}$. Since $d_\Gamma(C)=d+\ell$ and by the triangle inequality we have
$$d+\ell\leq d_\Gamma(\delta,\sigma)\leq d_\Gamma(\delta,\sigma_\ell)+d_\Gamma(\sigma_\ell,\sigma)=d_\Gamma(\delta,\sigma_\ell)+\ell.$$
So $d_\Gamma(\delta,\sigma_\ell)\geq d$, as required.
\end{proof} 
A PC with Hamming metric is not a graphical code as the Hamming distance between two permutations is never equal to 1 and so we cannot apply Proposition \ref{atleast} for the latter case. We do not know if the conclusion of Proposition of \ref{atleast} is valid for PCs with Hamming metric. We propose the following question. 
\begin{qu}
Let $d_H$ be the Hamming metric on $S_n$ and $d\geq 2$ be an arbitrary integer. Is it true that $$\{|C| \;|\; C\subseteq S_n \; \text{\rm and} \; d_H(C)=d\}=\{|C| \;|\; C\subseteq S_n \; \text{\rm and} \; d_H(C)\geq d\}?,$$
where $d_H(C)=\min\{d_H(x,y)\;|\; x,y\in C \; \text{\rm and} \; x\neq y\}$.
\end{qu}
\begin{defn}
Let $G$ be a finite group and $B, C$ be two non-empty subsets of $G$.  As usual we denote by $BC$ the set $\{bc\,|\,b\in B, c\in C\}$, where by $g=bc$ we refer to the group operation, also for each $g\in G$ we denote by $Bg$ the set $B\{g\}$. The set $B$ is called inverse closed if $B=B^{-1}:=\{b^{-1}\,|\,b\in B\}$. Also, we use the notation  $\xi$ to denote  the identity element of $G$. 
\end{defn} 
Let $G$ be a finite group and denote by $\mathbb{C}[G]$ the ``complex group algebra" of $G$. The elements of $\mathbb{C}[G]$ are of the formal sum 
\begin{equation}
\label{elementgroupring} 
\sum_{g \in G} a_g g,
\end{equation}
where $a_g\in \mathbb{C}$.  The complex group algebra is a $\mathbb{C}$-algebra with the following addition, multiplication and scaler product:
$$\sum_{g \in G} a_g g + \sum_{g \in G} b_g g=\sum_{g \in G} (a_g+b_g) \sigma, $$
$$\big(\sum_{g \in G} a_g g \big)  \big(\sum_{g \in G} b_g g\big)=\sum_{g \in G} \big(\sum_{g=g_1 g_2}a_{g_1} b_{g_2}\big) g,$$
$$\lambda \sum_{g \in G} a_g g =\sum_{g \in G}(\lambda a_g) g,$$
where $\lambda,a_g,b_g \in \mathbb{C}$.
If $a_g=0$ for some $g$, the term $a_g g$ will be neglected in \ref{elementgroupring} and $\sum_{g \in G} a_g g$ is written as
$a_1 g_1 +\cdots+a_k g_k$, where $\{g \;|\; a_g\neq 0\}=\{g_1,\dots,g_k\}$ is non-empty and otherwise $\sum_{g \in G} a_g g$ is denoted by $0$. For a non-empty finite subset $\Theta$ of $G$, we denote by $\widehat{\Theta}$ the element $\sum_{\theta\in \Theta} \theta$ of $\mathbb{C}[G]$.

 \begin{defn}\label{cayley}
 Let $G$ be a finite group and $S$ be a non-empty inverse closed subset of $G$  not-containing the identity element $\xi$ of $G$. Then the Cayley graph $\Gamma:=\cay(G,S)$ is a simple graph with $V(\Gamma)=\{g\,|\,g\in G\}$ and $E(\Gamma)=\big\{\{g,h\}\,\big|\,g,h\in G,gh^{-1} \in S\big\}$. 
 \end{defn}
  Let $G$ be a finite group and $S$ be a non-empty inverse closed subset of $G$  not-containing the identity element $\xi$ of $G$. Now we have a metric $d_\Gamma$ on $G$ defined by $\Gamma$ which is the shortest length of a  path between two vertices in $\cay(G,S)$. For example if $G=S_n$ and $S=\{(1,2),(2,3),\dots,(n-1,n)\}$, the metric $d_\Gamma$ is the Kendall $\tau$-metric on $S_n$. Also if $G=S_n$ and $S=T \cup T^{-1}$, where $T:=\{ (a , a+1, \ldots , b) \;|\; a<b, a,b\in [n] \}$, the metric $d_\Gamma$ is the Ulam metric on $S_n$. 
\begin{defn}
For a positive integer $r$ and an element $g\in G$, the ball of radius $r$ in $G$ under the metric $d_\Gamma$ is denoted by $B_r^\Gamma(g)$ defined by 
$B_r^\Gamma(g)=\{h\in G \;|\; d_\Gamma(g,h)\leq r\}$.
\end{defn}
\begin{rem}\label{ballsize}
 Note that $B_r^{\Gamma}{(g)}=(S^r \cup\{\xi\})g$, where $S^r:=\{s_1\cdots s_t \;|\; s_1,\dots,s_t\in S,\,1\leq t\leq r\}$. Also note that since $S$ is inverse closed, $B_r^{\Gamma}{(g)}=S^r g$ for all $r\geq 2$. It follows that $|B_r^\Gamma(g)|=|B_r^\Gamma(1)|=|S^r \cup\{\xi\}|$ for all $g\in G$. 
\end{rem}
\begin{prop}\label{rel}
Let $G$ be a finite group and $d_{\Gamma}$ be the metric induced by the graph $\cay(G,S)$. Then a subset $C$  of  $G$ is a code with $min\{d_{\Gamma}(x,y)\,|\,x,y\in C\}\geq d$ if and only if there exists $Y\subset G$ such that
	\begin{equation} \label{groupring}
	\widehat{(S^{\lfloor\frac{d-1}{2}\rfloor} \cup \{\xi\})} \widehat{C} =\widehat{G}-\widehat{Y},
	\end{equation}  
\end{prop}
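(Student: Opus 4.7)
My plan is to reformulate both sides of the biconditional in terms of a single combinatorial property: pairwise disjointness of balls of radius $r:=\lfloor(d-1)/2\rfloor$ centred at the codewords.

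First I would expand the left hand side of (\ref{groupring}). By distributivity in $\mathbb{C}[G]$, together with Remark~\ref{ballsize}, one obtains
\[
\widehat{(S^{r}\cup\{\xi\})}\,\widehat{C}\;=\;\sum_{c\in C}\widehat{(S^{r}\cup\{\xi\})c}\;=\;\sum_{c\in C}\widehat{B_{r}^{\Gamma}(c)}.
\]
(The second equality uses that right-multiplication by $c$ is a bijection on $G$, so no internal coincidences are created within a single summand.) Reading off the coefficient of an arbitrary $g\in G$, it equals the number of codewords $c\in C$ with $g\in B_{r}^{\Gamma}(c)$. On the other hand, the elements of $\mathbb{C}[G]$ of the form $\widehat{G}-\widehat{Y}$ with $Y\subset G$ are precisely those whose coefficients all lie in $\{0,1\}$, and $Y$ is forced to be the set where the coefficient is $0$. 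Hence (\ref{groupring}) holds for some $Y\subset G$ if and only if every $g\in G$ lies in at most one ball $B_{r}^{\Gamma}(c)$, equivalently, the balls $\{B_{r}^{\Gamma}(c):c\in C\}$ are pairwise disjoint (with $Y$ then being the uncovered complement).

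Second, I would prove that pairwise ball disjointness is equivalent to $\min\{d_{\Gamma}(x,y):x,y\in C,\,x\neq y\}\geq d$. One direction is the triangle inequality: if some distinct $x,y\in C$ had $d_{\Gamma}(x,y)<d$, pick the vertex $z$ at distance $\lfloor d_{\Gamma}(x,y)/2\rfloor$ from $x$ along a shortest $x$--$y$ path; then $d_{\Gamma}(x,z),d_{\Gamma}(z,y)\leq r$, so $z\in B_{r}^{\Gamma}(x)\cap B_{r}^{\Gamma}(y)$, contradicting disjointness. Conversely, if two balls $B_{r}^{\Gamma}(x),B_{r}^{\Gamma}(y)$ shared a point $z$, then $d_{\Gamma}(x,y)\leq 2r\leq d-1$, contradicting the minimum-distance assumption.

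There is no deep obstacle here; the argument is essentially a dictionary between sphere packing in $\mathrm{Cay}(G,S)$ and the group-algebra identity (\ref{groupring}). The only mildly delicate point is the midpoint construction in the second step, which is cleanest when $d$ is odd (so that $2r=d-1$ exactly), and in particular for the case $d=3$ that drives the rest of the paper.
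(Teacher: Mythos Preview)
Your approach is essentially the paper's: both arguments reduce the group-algebra identity to pairwise disjointness of the balls $B_r^\Gamma(c)=Tc$ with $T=S^r\cup\{\xi\}$. The paper in fact only writes out the forward direction (minimum distance $\geq d$ implies (\ref{groupring})), arguing exactly as you do that $Tc\cap Tc'\neq\varnothing$ would force $d_\Gamma(c,c')\leq 2r\leq d-1$; your coefficient-reading formulation is a tidier packaging of the same idea and additionally supplies the converse.

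One caveat on your second step. The midpoint construction is not merely ``mildly delicate'' for even $d$: it actually fails there. If $d$ is even and $d_\Gamma(x,y)=d-1$, then the farther half of the path has length $\lceil(d-1)/2\rceil=d/2>r=d/2-1$, so no vertex on a shortest $x$--$y$ path lies in both $B_r^\Gamma(x)$ and $B_r^\Gamma(y)$. Indeed the ``only if'' direction of the proposition, read literally, is false for even $d$: disjointness of radius-$r$ balls is equivalent to minimum distance $\geq 2r+1=d-1$, not $\geq d$ (e.g.\ on a $6$-cycle with $d=4$, $r=1$, the code $C=\{0,3\}$ has disjoint radius-$1$ balls but minimum distance $3$). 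For odd $d$---and in particular for $d=3$, the only case the paper actually uses---your argument is correct and matches the paper's.
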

\begin{proof}
Let $r:=\lfloor\frac{d-1}{2}\rfloor$, $Y=G\setminus\cup_{c\in C}B_r^\Gamma(c)$ and $T:=S^r\cup\{\xi\}$. So $G=\cup_{c\in C}B_r^\Gamma(c)\cup Y$. It follows from   Remark \ref{ballsize} that for each $c\in C$, $B_r^\Gamma(c)=Tc$ and so $\cup_{c\in C}B_r^\Gamma(c)=TC$. Therefore, $\widehat{G}=\widehat{T C}+\widehat{Y}$. On the other hand, for any two distinct elements $c,c'$ in $C$, $Tc \cap T c'=\varnothing$ since otherwise $d_{\Gamma}(c,c')\leq d-1$ that is a contradiction. Hence, $ \widehat{T C}=\widehat{T}\widehat{C}$ and this completes the proof.
\end{proof}
\begin{defn}
Let $G$ be a finite group and $d_{\Gamma}$ be the metric induced by  $\cay(G,S)$. For a positive integer $r$, an $r$-perfect code or a perfect code of radius $r$ of $G$ under the metric $d_\Gamma$ is a subset $C$ of $G$ such that $G=\cup_{c\in C} B_r^\Gamma(c)$ and $B_r^\Gamma(c)\cap B_r^\Gamma(c')=\varnothing$ for any two distinct $c,c'\in C$.
\end{defn}
\begin{rem}
By a similar argument as the proof of Proposition {\rm\ref{rel}}, it can be seen that if $C$ is an $r$-perfect code, then $\widehat{(S^r \cup \{\xi\})} \widehat{C} =\widehat{G}$. We note that according to Remark {\rm\ref{ballsize}} $C$ is an $r$-perfect code if and only if  $|C||S^r\cup \{\xi\}|=|G|$. 
\end{rem}
Let $\rho$ be any (complex) \emph{representation} of a finite group $G$ of dimension $k$ for some positive integer $k$, i.e., any group homomorphism from $G$ to the general linear group $\GL_k(\mathbb{C})$ of  $k\times k$ invertible matrices over $\mathbb{C}$. Then by the universal property of $\mathbb{C}[G]$, $\rho$ can be extended to an algebra homomorphism $\hat{\rho}$ from $\mathbb{C}[G]$ to the algebra $\mat_k(\mathbb{C})$ of $k\times k$ matrices over $\mathbb{C}$ such that
$g^{\hat{\rho}}=g^\rho$ for all $g\in G$. Thus the image of $\widehat{\Theta}$ for any non-empty subset $\Theta$ of $G$ under $\hat{\rho}$ is  the element $\sum_{\theta\in \Theta}\theta^\rho$ of $\mat_k(\mathbb{C})$. 
In particular  by applying $\hat{\rho}$ on the equality \ref{groupring}, we obtain 	  
\begin{equation}\label{groupringmain}
	\big(\sum_{s\in S \cup\{\xi\}} s^\rho\big) \big(\sum_{c\in C}c^\rho\big)=\sum_{g\in G}g^\rho- \sum_{y\in Y }y^\rho, 
\end{equation} where
	the latter equality is between elements of $\mat_k(\mathbb{C})$. 
	
In the following, we state an important definition that it will play a central role in the proof of the main results of this paper.
\begin{defn}\label{action}
Given a group $G$ and a non-empty set $\Theta$, recall that we say $G$ acts on $\Theta$ {\rm(}from the right{\rm)} if there exists a function $\Theta\times G\rightarrow \Theta$ denoted by $(\theta,g)\mapsto \theta^g$ for all $(\theta,g)\in \Theta\times G$ if $(\theta^g)^h=\theta^{gh}$ and $\theta^\xi=\theta$ for all $\theta\in \Theta$ and all $g,h\in G$.  For any $\theta\in \Theta$ the set 
$\Stab_G(\theta):=\{g\in G\;|\; \theta^g=\theta\}$ is called the stabilizer of $\theta$ in $G$ which is a subgroup of $G$. If the action is transitive {\rm(}i.e., for any two elements $\theta_1,\theta_2 \in \Theta$, there exists $g\in G$ such that $\theta_1^g=\theta_2${\rm)}, all stabilizers are conjugate under the elements of $G$, more precisely $\Stab_G(\theta_1)^g=\Stab_G(\theta_2)$ whenever $\theta_1^g=\theta_2$, where $\Stab_G(\theta_1)^g=g^{-1}\Stab_G(\theta_1)g$. \\
Now suppose that $G$ acts on $\Theta$ and $|\Theta|=k$ is finite.  Fix an arbitrary ordering on the elements of $\Theta$ so that $\theta_i<\theta_j$ whenever $i<j$ for distinct elements $\theta_i,\theta_j\in \Theta$. Denote by $\rho_{\Theta}^G$ the map from $G$ to $\GL_k(\mathbb{Z})$ defined by $g\mapsto P_g$, where
$P_g$ is the $|\Theta|\times |\Theta|$ matrix whose $(i,j)$ entry is $1$ if $\theta_i^g=\theta_j$ and $0$ otherwise. 
\end{defn}
\begin{rem}
Note that the definitions of $\rho_{\Theta}^G$ depends on the choice of the ordering on $\Theta$, however any two such representations of $G$ are conjugate by a permutation matrix.
\end{rem}
\begin{rem}
Let $H$ be a subgroup of a finite group $G$ and  $X$ be the set of  right cosets of $H$ in $G$, i.e., $X:=\{Hg\,|\, g\in G\}$. Then $G$ acts transitively on $X$ via $(Hg,g_0)\longrightarrow Hgg_0$. It is known that $X$  partitions  $G$, i.e., $G=\cup_{x\in X}{x}$ and $x\cap x'=\varnothing$ for all distinct elements $x$ and $x'$ of $X$, and $|X|=|G|/|H|$.
\end{rem}
\begin{lem}\label{coset1}
Let $H$ be a subgroup of a finite group $G$ and  $X=\{Ha_1,\ldots ,Ha_m\}$ be the set of  right cosets of $H$ in $G$. If $\mathcal{Y}\subset G$, then by fixing the ordering  $Ha_i<Ha_j$ whenever $i<j$, the $(i,j)$ entry of  $\sum_{y\in\mathcal{Y}}y^{\rho_{X}^G}$  is $|\mathcal{Y}\cap {a_i}^{-1}Ha_j| $. 
\end{lem}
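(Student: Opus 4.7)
The plan is to unpack the definition of $\rho_X^G$ given in Definition~\ref{action} and read off the matrix entries one $y$ at a time. Fix the prescribed ordering $Ha_1<\cdots<Ha_m$ of the right cosets. For a single $y\in G$, the matrix $y^{\rho_X^G}$ is (by definition) the permutation matrix of the map $Ha_i\mapsto (Ha_i)^y=Ha_iy$ on $X$; that is, its $(i,j)$ entry equals $1$ if $Ha_iy=Ha_j$ and $0$ otherwise. Hence
\[
\bigl(y^{\rho_X^G}\bigr)_{i,j}=\begin{cases}1 & \text{if } Ha_iy=Ha_j,\\ 0 & \text{otherwise.}\end{cases}
\]

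Next I would translate the coset condition into a membership condition on $y$. We have $Ha_iy=Ha_j$ if and only if $a_iya_j^{-1}\in H$, which is equivalent to $y\in a_i^{-1}Ha_j$. Therefore $\bigl(y^{\rho_X^G}\bigr)_{i,j}=1$ exactly when $y\in a_i^{-1}Ha_j$, and equals $0$ otherwise. In other words, for each fixed pair $(i,j)$, the map $y\mapsto (y^{\rho_X^G})_{i,j}$ is precisely the indicator function of the subset $a_i^{-1}Ha_j\subseteq G$.

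Finally, summing over $y\in\mathcal{Y}$ and using linearity of the $(i,j)$-entry functional on $\mat_m(\mathbb{C})$,
\[
\Bigl(\sum_{y\in\mathcal{Y}} y^{\rho_X^G}\Bigr)_{i,j}=\sum_{y\in\mathcal{Y}}\bigl(y^{\rho_X^G}\bigr)_{i,j}=\bigl|\{y\in\mathcal{Y}\mid y\in a_i^{-1}Ha_j\}\bigr|=|\mathcal{Y}\cap a_i^{-1}Ha_j|,
\]
which is exactly the claim.

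There is no real obstacle: the statement is a bookkeeping lemma unpacking the permutation-representation on cosets, and the only point requiring care is the equivalence $Ha_iy=Ha_j\iff y\in a_i^{-1}Ha_j$, which is a standard coset manipulation. The lemma will be used later to translate the matrix equation obtained from~\eqref{groupringmain} into a system of integer equations counting intersections of $\mathcal{Y}$ with double-coset-type sets $a_i^{-1}Ha_j$, so the explicit form of the entry is what matters for the applications.
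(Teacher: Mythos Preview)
Your proof is correct and follows essentially the same approach as the paper's: unpack the definition of $\rho_X^G$ to see that the $(i,j)$ entry of $y^{\rho_X^G}$ is $1$ precisely when $Ha_iy=Ha_j$, rewrite this as $y\in a_i^{-1}Ha_j$, and sum over $y\in\mathcal{Y}$. The paper's version is just a more compressed rendering of exactly these steps.
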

\begin{proof}
Clearly, for any $y\in \mathcal{Y}$, the $(i,j)$ entry of  $y^{\rho_{X}^G}$ is $1$ if $Ha_iy=Ha_j$ and is $0$ otherwise. So the $(i,j)$ entry of  $y^{\rho_{X}^G}$ is $1$ if $a_i y {a_j}^{-1}\in H$ and therefore $y\in {a_i}^{-1}Ha_j$. Hence, the $(i,j)$ entry of  $\sum_{y\in\mathcal{Y}}y^{\rho_{X}^G}$ is equal to $|\{y\in \mathcal{Y}\, |\,y\in {a_i}^{-1}Ha_j\}|$. This completes the proof. 
\end{proof}
The following result summarizes the main method used in this paper.
\begin{thm}\label{intprogram}
Let $G$ be a finite group and $d_{\Gamma}$ be the metric induced by the graph $\cay(G,S)$. Also, let $C$ be a code in $G$ with $min\{d_{\Gamma}(c,c')\,|\,c,c'\in C\}\geq d$. If $H$ is a subgroup of $G$ and $X$ is the set of  right cosets of $H$ in $G$, then the optimal value of the objective function of the following integer programming problem gives an upper bound on $|C|$.
\begin{align*}\label{max}
\text{Maximize} &\quad \sum_{i=1}^{|X|}{x_i},\\
\text{subject to}&\quad \widehat{T^{\rho_{X}^G}}(x_1,\ldots ,x_{|X|})^t \leq |H| \mathbf{1},\\
& \quad x_i\in \mathbb{Z},\,\, x_i\geq 0, \,\, i\in\{1,\ldots,|X|\},
\end{align*}
where $T:=S^{\lfloor\frac{d-1}{2}\rfloor}\cup \{\xi\}$, $\textbf{1}$ is the column vector of order $|X|\times 1$ whose  entries are equal to $1$. 
\end{thm}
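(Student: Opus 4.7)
The plan is to derive the IP constraint from Proposition \ref{rel} by applying the coset-action representation $\rho_X^G$ to the group-algebra identity and then reading off a single column of the resulting matrix equation.

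By Proposition \ref{rel}, there is a subset $Y\subset G$ with $\widehat{T}\widehat{C}+\widehat{Y}=\widehat{G}$ in $\mathbb{C}[G]$, where $T=S^{\lfloor(d-1)/2\rfloor}\cup\{\xi\}$. Applying the extended algebra homomorphism $\hat{\rho}_X^G$ (for brevity write $M_\Theta$ for the image of $\widehat{\Theta}$) yields the matrix identity
$$M_T\,M_C + M_Y = M_G$$
in $\mat_{|X|}(\mathbb{C})$. By Lemma \ref{coset1}, the $(i,j)$-entry of $M_\Theta$ equals $|\Theta\cap a_i^{-1}Ha_j|$; taking $\Theta=G$ shows $M_G=|H|J$ where $J$ is the all-ones matrix, while taking $\Theta=Y$ shows that every entry of $M_Y$ is a nonnegative integer.

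Next I will inspect a single column, say the first. Set $x_i:=|C\cap a_i^{-1}Ha_1|$ for $i=1,\dots,|X|$, so that $(x_1,\ldots,x_{|X|})^t$ is exactly the first column of $M_C$. The sets $a_i^{-1}Ha_1$ are the right-translates by $a_1$ of the left cosets $a_i^{-1}H$ of $H$, which are pairwise distinct (since $a_ja_i^{-1}\in H\iff Ha_i=Ha_j\iff i=j$); hence they partition $G$, yielding $\sum_{i=1}^{|X|} x_i=|C|$ with each $x_i$ a nonnegative integer.

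Equating the first columns of $M_T\,M_C + M_Y = |H|J$ and using that the first column of $M_Y$ has nonnegative entries gives $M_T\,(x_1,\ldots,x_{|X|})^t \leq |H|\mathbf{1}$ entrywise. Thus $(x_1,\ldots,x_{|X|})$ is feasible for the IP with objective value $|C|$, and so $|C|$ is at most the optimum. I do not foresee a genuine obstacle: the main bookkeeping is to recognize that the matrix $M_T$ obtained this way is precisely $\widehat{T}^{\hat{\rho}_X^G}$ from the statement, and to verify the partition of $G$ by the translates $a_i^{-1}Ha_1$ so that $\sum_i x_i=|C|$.
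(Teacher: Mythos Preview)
Your proposal is correct and follows essentially the same route as the paper: apply the representation $\rho_X^G$ to the group-ring identity from Proposition~\ref{rel}, use Lemma~\ref{coset1} to identify $M_G=|H|J$ and the first column of $M_C$, and deduce feasibility from the nonnegativity of $M_Y$. The only cosmetic difference is that the paper normalizes $a_1=\xi$ so that the first-column entries become $|C\cap H a_i|$ (intersections with right cosets), whereas you keep $a_1$ general and observe directly that the sets $a_i^{-1}Ha_1$ partition $G$; both give $\sum_i x_i=|C|$.
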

\begin{proof}
Let $r:=\lfloor\frac{d-1}{2}\rfloor$. By Equation \ref{groupringmain}, there exists $Y\subset G$ such that
\begin{equation}
	\big(\sum_{s\in T} s^{\rho_{X}^G}\big) \big(\sum_{c\in C}c^{\rho_{X}^G}   \big)=\sum_{g\in G}g^{\rho_{X}^G}- \sum_{y\in Y }y^{\rho_{X}^G}, 
	\end{equation}
	Suppose that $X=\{Ha_1,\ldots ,Ha_m\}$. Without loss of generality, we may assume that $a_1=1$. We fix the ordering $Ha_i<Ha_j$ whenever $i<j$.
	By Lemma \ref{coset1}, the $(i,j)$ entry of $\sum_{g\in G}g^{\rho_{X}^G}$ is equal to $|G \cap {a_i}^{-1}Ha_j|$ and  since  $ {a_i}^{-1}Ha_j\subseteq G$, the $(i,j)$ entry of $\sum_{g\in G}g^{\rho_{X}^G}$ is equal to $|{a_i}^{-1}Ha_j|=|H|$. So if $B$ is a column of $\sum_{g\in G}g^{\rho_{X}^G} $, then $B=|H| \mathbf{1}$. Let $\mathcal{C}$ be the first column of $\sum_{c\in C}c^{\rho_{X}^G}$. Then  Lemma \ref{coset1} implies that for all $1\leq i\leq |X|$,  $i$-th row of $\mathcal{C}$, denoted by $c_i$, is equal to $|C\cap Ha_i|$. Since $C=C\cap G=\cup_{i=1}^{|X|} (C\cap Ha_i)$ and $(C\cap Ha_i)\cap (C\cap Ha_j)=\varnothing$ for all $i\neq j$, $\sum_{i=1}^{|X|}c_i=|C|$. We note that by Lemma \ref{coset1}, all entries of matrix $\widehat{F^{\rho_{X}^G}}$, $F\in \{C,G,Y,T\}$, are integer and non-negative. Therefore $\mathcal{C}$ is an integer solution for the following system of inequalities 
	\[\widehat{T^{\rho_{X}^G}}(x_1,\ldots ,x_{|X|})^t \leq |H| \mathbf{1}
	\]
	such that $\sum_{i=1}^{|X|}c_i=|C|$ and this completes the proof.
\end{proof}

\section{Results}\label{sec3}
Let $G=S_n$ and $S=\{(i,i+1)\,|\,1\leq i\leq n-1\}$, then the metric induced by $\cay(G,S)$ on $S_n$ is the Kendall $\tau$-metric. In this section, by using the result in Section \ref{sec2}, we improve the upper bound of $P(n,3)$ when  $n\in \{6,14,15\} $ or $n\geq 7$ is a prime number. We note that for two permutations $\sigma$ and $\lambda$ of $S_n$, their multiplication $\lambda \cdot\sigma$ is defined as the composition of $\sigma$ on $\lambda$, namely  $\lambda\cdot \sigma(i) = \sigma(\lambda(i))$ for all $i \in [n]$.

In order to apply Theorem \ref{intprogram}, we need to fix the subgroup
$H$ and that different choices for $H$ will lead to different results.
Usual traditional with well-developed candidate for $H$ is the  Young subgroups  which we are going to recall them \cite{JK}. 
\begin{defn}\label{young1}
     By a number  partition $\lambda$ of $n$ {\rm(}with the length $m)$ we mean an $m$-tuple $(\lambda_1,\dots,\lambda_m)$ of positive integers such that $\lambda_1\geq \cdots\geq\lambda_m$ and $n=\sum_{i=1}^m \lambda_i$. If $\lambda$ and $\mu$ are two partitions of $n$, we say that $\lambda$ dominates $\mu$, and write $\lambda\unlhd \mu$, provided that $\sum_{i=1}^j \lambda_i\geq \sum_{i=1}^j \mu_i$ for all $j$. Let $\lambda$ be a partition of $n$ and $\Delta:=(\Delta_1,\dots,\Delta_m)$ be an $m$-tuple  of non-empty subsets of $[n]$  consisting a set partition  for $[n]$ with $|\Delta_i|=\lambda_i$ for all $i=1,\dots,m$. 
We associate a Young subgroup $S_\Delta$ of $S_n$  by taking
 $S_\Delta=S_{\Delta_1}\times \cdots\times S_{\Delta_m}$, where $S_{\Delta_i}$ is the symmetric group on the set $\Delta_i$ for all $ i=1,\ldots ,m$. 
\end{defn}
\begin{rem}\label{young}
Let $\lambda$ be a partition of $n$ and $\Delta$, $\Delta'$  be two $m$-tuples  of non-empty subsets of $[n]$  consisting a set partition  for $[n]$ with $|\Delta_i|=|\Delta'_i|=\lambda_i$ for all $i=1,\dots,m$. It is known that the representations $\rho_{X}^{S_n}$ and $\rho_{X'}^{S_n}$, where $X$ and $X'$ are the set of  right cosets of the Young subgroups $S_{\Delta}$ and $S_{\Delta'}$ in $S_n$, respectively,  are equivalent {\rm(}i.e.,  a matrix $U$ exists such that $U^{-1}\rho_{X}^{S_n}(\sigma)U=\rho_{X'}^{S_n}(\sigma)$ for all $\sigma \in S_n)$. Hence, we use the $m$-tuples  of non-empty subsets of $[n]$, $[\{1,\ldots ,\lambda_1\},\{\lambda_1+1,\ldots ,\lambda_1+\lambda_2\},\ldots,\{n-\lambda_m+1,\ldots ,n\}]$ for considering the Young subgroup corresponding to the partition $\lambda=(\lambda_1,\ldots ,\lambda_m)$,  as we are studying these representations up to equivalence. 
\end{rem}
For example, if $n = 7$ and $\lambda = (3, 2, 2)$, then  the  Young subgroup corresponding to the partition $\lambda$ is the subgroup $H = \{\sigma_1\cdot \sigma_2\cdot ·\sigma_3 \,|\, \sigma_1 \in S_3, \sigma_2 \in S_{\{4,5\}}, \sigma_3 \in S_{\{6,7\}}\}$. 
\begin{lem}\label{matrix}
Let $H$ be a Young subgroup of $S_n$ corresponding to the partition $\lambda:=(n-1,1)$ and $X$ be the set of  right cosets of $H$ in $S_n$. If $S=\{(i,i+1)\,|\,1\leq i\leq n-1\}$ and $T:=S\cup \{\xi\}$,  then $ \widehat{T^{\rho_{X}^{S_n}}} $ is a  conjugate by a permutation matrix of the following matrix
\begin{equation}\label{matn}
\begin{pmatrix}
	n-1   &1    &0     &0     &\dots &0  \\
	1     &n-2  &1     &0     &\dots &0    \\
	0     &1    &n-2   &1     &0     &0    \\
	\vdots&\dots&\ddots&\ddots&\ddots&\vdots\\
	0     &0    &\dots &1     &n-2   &1     \\
	0     &0    &\dots &0     &1     &n-1
	\end{pmatrix}.
\end{equation}
\end{lem}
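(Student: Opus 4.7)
The plan is to apply Lemma \ref{coset1} with a concrete choice of coset representatives and then read off the entries of $\widehat{T^{\rho_{X}^{S_n}}}$ by a short case check. First, by Remark \ref{young} I may take the Young subgroup attached to $(n-1,1)$ to be $H=S_{\{1,\dots,n-1\}}$, which is exactly $\Stab_{S_n}(n)$ under the natural action of $S_n$ on $[n]$; hence $|H|=(n-1)!$ and $|X|=n$. I would pick the involutive representatives $a_i:=(i,n)$ for $1\le i\le n-1$ and $a_n:=\xi$, which satisfy $a_i(n)=i$ and thereby give a clean bijection $Ha_i\leftrightarrow i$. Ordering the cosets by $Ha_i<Ha_j \Leftrightarrow i<j$ is harmless: Remark \ref{young} guarantees that any other ordering changes the resulting matrix only by conjugation by a permutation matrix, which is precisely what the statement allows.

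Second, keeping the convention $\lambda\cdot\sigma(x)=\sigma(\lambda(x))$ in mind, a direct computation gives
\[
a_i^{-1}Ha_j \;=\; \{g\in S_n \,:\, g(i)=j\}.
\]
Lemma \ref{coset1} then expresses the $(i,j)$ entry of $\widehat{T^{\rho_{X}^{S_n}}}$ as the number of elements $t\in T=\{\xi\}\cup\{(k,k+1):1\le k\le n-1\}$ with $t(i)=j$.

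The final step is a routine two-case split. On the diagonal, $\xi$ contributes $1$, and among the adjacent transpositions $(k,k+1)$ the ones fixing $i$ are exactly those with $k\notin\{i-1,i\}$: this gives $n-2$ transpositions when $i\in\{1,n\}$ (diagonal entry $n-1$) and $n-3$ transpositions when $1<i<n$ (diagonal entry $n-2$). Off the diagonal, $(k,k+1)(i)=j$ forces $\{i,j\}=\{k,k+1\}$, so the $(i,j)$ entry is $1$ when $|i-j|=1$ and $0$ otherwise. Collating these entries yields precisely the matrix \eqref{matn}.

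I expect no genuine obstacle; the only point of mild care is the left/right-action bookkeeping in the identity $a_i^{-1}Ha_j=\{g:g(i)=j\}$, since a convention mix-up would swap $i$ and $j$ (harmlessly here, as the target matrix happens to be symmetric).
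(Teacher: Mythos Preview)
Your proposal is correct and follows essentially the same route as the paper: realize $H$ as a point stabilizer, pick transposition coset representatives, apply Lemma~\ref{coset1}, and count the elements of $T$ in each $a_i^{-1}Ha_j$. The only cosmetic differences are that the paper uses $H=\Stab_{S_n}(1)$ with representatives $(1,i)$ rather than your $H=\Stab_{S_n}(n)$ with $(i,n)$, and the paper handles the off-diagonal case by an explicit factorization $(1,i)\cdot h\cdot(1,j)=\pi(1,j,i)$ rather than your cleaner one-line identification $a_i^{-1}Ha_j=\{g:g(i)=j\}$; both lead to the same entry count.
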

\begin{proof}
 Without loss of generality we may assume that $\lambda$ is the partition $\{\{1\},\{2,\ldots ,n\}\}$ of $n$ and therefore $H=\Stab_{S_n}(1)$. Clearly, for each $i\in [n]$, if $\sigma\in H(1,i)$, then $\sigma(1)=i$ and so $H(1,i)\cap H(1,j)=\varnothing$  for all $i\neq j$.  So we can let  $X=\{H(1,i)\,|\,1\leq i\leq n\}$, where we are using the convention $H(1,1):=H$. Fix the ordering of $X$ such that $H(1,i)<H(1,j)$ if $i<j$. By Lemma \ref{coset1}, the $(i,j)$ entry of $ \widehat{T^{\rho_{X}^{S_n}}} $ is equal to $|T\cap (1,i)H(1,j)|$. If $i=j$, then Remark \ref{action} implies $ (1,i)H(1,i)=\Stab_{S_n}(i) $ and hence $T\cap (1,i)H(1,i)=T\setminus \{(i-1,i),(i,i+1)\} $ if $2\leq i\leq n-1$, $T\cap (1,n)H(1,n)=T\setminus \{(n,n-1)\} $ and $T\cap H=T\setminus \{(1,2)\} $. Now suppose that $i\neq j$. Clearly $(1,i)\cdot(i,j)\cdot(1,j)=(i,j)$. Let $h\in H$. Then  $\sigma:=(1,i)\cdot h\cdot(1,j)=\pi(1,j,i)$, where $\pi=(1,i)\cdot h\cdot(1,i)\in \Stab_{S_n}(i)$. Since $\pi(i)=i$, $\sigma(j)=i$ and therefore $\sigma$ is an transposition if and only if $h=(i,j)$. Hence, if $j=i+1$ and  $i-1$, then  $T\cap (1,i)H(1,j)$ is equal to  $ \{(i,i+1)\} $ and $ \{(i-1,i)\} $, respectively, and otherwise $T\cap (1,i)H(1,j)=\varnothing$. This completes the proof.
\end{proof}
\begin{thm}\label{systemineq}
	Let $p\geq 7$ be a prime number and consider the $p\times p$ matrix $$M=\begin{pmatrix}
	p-1   &1    &0     &0     &\dots &0  \\
	1     &p-2  &1     &0     &\dots &0    \\
	0     &1    &p-2   &1     &0     &0    \\
	\vdots&\dots&\ddots&\ddots&\ddots&\vdots\\
	0     &0    &\dots &1     &p-2   &1     \\
	0     &0    &\dots &0     &1     &p-1
	\end{pmatrix}.$$ Consider the system of inequalities $M(x_1,\ldots ,x_{p})^t\leq (p-1)! \mathbf{1}$ with $(x_1,\ldots ,x_{p})^t\geq \mathbf{0}$ and $x_i$ are integers. Let $x_{\max}:=\max\{x_i\;|\; i=1,\dots,p\}$. Then
\begin{enumerate}
\item $|\{i\in[p] \;|\; x_i\leq \frac{(p-1)!}{p}\}|\geq \lceil\frac{p}{3}\rceil$.
\item If $\sum_{i=1}^p x_i=(p-1)!-k$, then $|\{i \;|\; x_i=x_{\max}\}|\geq p-k-2$.
\item  $\sum_{i=1}^p x_i \leq (p-1)!-\lceil \frac{p}{3}\rceil+2$	
\end{enumerate} 
\end{thm}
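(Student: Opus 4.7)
The plan is to use Wilson's theorem, writing $(p-1)!=pa+(p-1)$ with $a:=\lfloor(p-1)!/p\rfloor$, and to case-split on how close $x_{\max}$ sits to $a$. I would establish (1) first as a combinatorial foundation, then (2) as the technical heart, and deduce (3) by combining them.

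For (1), I would observe that for each interior index $i$ (with $2\le i\le p-1$), the $i$-th row forbids the simultaneous occurrence $x_{i-1},x_i,x_{i+1}\ge a+1$: this would force $(p-2)x_i+x_{i-1}+x_{i+1}\ge p(a+1)=(p-1)!+1$, violating the constraint. So the ``large'' set $L=\{i:x_i\ge a+1\}$ contains no three consecutive integers, giving $|L|\le\lfloor 2p/3\rfloor$ and $|L^c|\ge\lceil p/3\rceil$; since $(p-1)!/p$ is not an integer, $L^c$ is exactly the set of indices in the claim.

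For (2), I would pass to the shifted variables $v_i=x_{\max}-x_i\ge 0$ and the scalar $t=px_{\max}-(p-1)!$, rewriting the system equivalently as $M\mathbf v\ge t\mathbf 1$ with $\sum_i v_i=t+k$; write $D=\{i:x_i<x_{\max}\}$ and $d=|D|$, so the goal is $d\le k+2$. The key arithmetic input is $t\equiv 1\pmod{p}$ (since $px_{\max}\equiv 0$ and $(p-1)!\equiv-1\pmod{p}$), so $t\ge 1$ forces $t\in\{1,p+1,2p+1,\dots\}$. When $x_{\max}\le a$, $k\ge p-1$ and $d\le p\le k+2$ trivially; when $x_{\max}=a+1$, $t=1$ and $d\le\sum v_i=k+1$. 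The remaining range $x_{\max}\ge a+2$ forces $t\ge p+1$, and I would subdivide on the location of $D^c$: if $1\in D^c$ (symmetrically $p\in D^c$), row $1$ alone gives $v_2\ge t$, hence $\sum v_i\ge t+(d-1)$ and $d\le k+1$; if $1,p\in D$ and some interior $i\in D^c$ has both neighbours in $D$, row $i$ gives $v_{i-1}+v_{i+1}\ge t$ and $\sum v_i\ge t+(d-2)$, yielding $d\le k+2$; finally if $1,p\in D$ and every $i\in D^c$ has a $D^c$-neighbour, picking adjacent $i,i+1\in D^c$ (both interior), rows $i$ and $i+1$ independently force $v_{i-1}\ge t$ and $v_{i+2}\ge t$, so $\sum v_i\ge 2t+(d-2)$ and $d\le k+2-t\le k+1-p$, well below $k+2$.

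For (3), I would combine (1) and (2). If $x_{\max}\le a$, then $\sum x_i\le pa=(p-1)!-(p-1)$, so $k\ge p-1\ge\lceil p/3\rceil-2$. Otherwise $x_{\max}\ge a+1$, in which case the $\lceil p/3\rceil$ indices with $x_i\le a$ from part (1) all lie in $D$, and (2) gives $k\ge d-2\ge\lceil p/3\rceil-2$. The hard part will be the final sub-case of (2), where adjacent $D^c$-pairs must be shown to inflate $\sum v_i$ beyond what a small $d$ allows; the dichotomy ``$t=1$ or $t\ge p+1$'' produced by Wilson's theorem is exactly the leverage that makes the doubled lower bound $\sum v_i\ge 2t+(d-2)$ close the argument.
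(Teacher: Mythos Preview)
Your plan is essentially correct and your deduction of (3) from (1) and (2) matches the paper's. Two comments, one a small gap and one a substantial simplification.

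\textbf{Part (1).} The implication ``$L$ contains no three consecutive integers, hence $|L|\le\lfloor 2p/3\rfloor$'' is off by one as written: a subset of $[p]$ avoiding three consecutive integers can have size $\lceil 2p/3\rceil$ (e.g.\ $\{1,2,4,5,7,8,\ldots\}$), which only gives $|L^c|\ge\lfloor p/3\rfloor$. You must also invoke the boundary rows: row $1$ forbids $\{1,2\}\subseteq L$ (since $(p-1)(a+1)+(a+1)=p(a+1)=(p-1)!+1$) and row $p$ forbids $\{p-1,p\}\subseteq L$; with these two extra constraints the bound $|L^c|\ge\lceil p/3\rceil$ does follow. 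The paper packages this more directly by exhibiting a partition of $[p]$ into $\lceil p/3\rceil$ blocks $\{1,2\},\{3,4,5\},\{6,7,8\},\ldots$ and noting that the row associated to each block forces some element of that block into $L^c$.

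\textbf{Part (2).} Your case analysis is valid but far more elaborate than needed, and your appeal to Wilson's theorem here is a red herring. The paper disposes of (2) in one line: choose any $\ell$ with $x_\ell=x_{\max}$; then the $\ell$-th row of $M\mathbf{x}\le(p-1)!\mathbf{1}$ reads $x_{\ell-1}+(p-2)x_\ell+x_{\ell+1}\le(p-1)!$, so
\[
\sum_{i\neq \ell-1,\ell+1}(x_{\max}-x_i)\;=\;x_{\ell-1}+(p-2)x_\ell+x_{\ell+1}-\sum_{i=1}^p x_i\;\le\;(p-1)!-\bigl((p-1)!-k\bigr)\;=\;k.
\]
Since each summand on the left is nonnegative, at most $k$ of them are positive, whence $|\{i:x_i<x_{\max}\}|\le k+2$. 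No shift to $v_i$, no dichotomy on $t$, no structural analysis of $D^c$ is required. Wilson's theorem is used only in part (3), exactly where you also use it, to convert $x_{\max}\le(p-1)!/p$ into $x_{\max}\le a$ and thereby force $k\ge p-1$.
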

\begin{proof}
Let $\mathcal{A}:=\{i\in[p] \;|\; x_i\leq \frac{(p-1)!}{p}\}$ and $\mathcal{B}:=\{i \;|\; x_i=x_{\max}\}$. Consider the partition $\{\{1,2\},\{3,4,5\},\{6,7,8\},\dots,\{p-2,p-1,p\}\}$ of $[p]$ if $p\equiv 2 \mod 3$ and the partition $\{\{1,2\},\{3,4,5\},\{6,7,8\},\dots,\{p-4,p-3,p-2\},\{p-1,p\}\}$ if $p\equiv 1 \mod 3$.  Each  member of partitions corresponds to an obvious inequality, e.g.
$\{1,2\}$ and $\{p-2,p-1,p\}$ are respectively corresponding to  $(p-1)x_1+x_2\leq (p-1)!$ and  $x_{p-2}+(p-2)x_{p-1}+x_p\leq (p-1)!$. Each  inequality corresponding to a member $P$ of  the partitions forces $x_i\leq (p-1)!/p$ for some $i\in P$, where $x_i=\min\{x_j\;|\; j\in P\}$. Since the size of both partitions is  $\lceil \frac{p}{3}\rceil$, we have that $|\mathcal{A}|\geq \lceil\frac{p}{3}\rceil$ and so the first part is proved.

It follows from $M(x_1,\ldots ,x_{p})^t\leq (p-1)! \mathbf{1}$  and  $(x_1,\ldots ,x_{p})^t\geq \mathbf{0}$ that $0\leq \sum_{i=1}^p M_i\mathbf{x}=p(\sum_{i=1}^px_i)\leq p!$, where $M_i$ is $i$-th row of $M$ and so $0\leq \sum_{i=1}^px_i\leq (p-1)!$.  Let $\ell\in [p]$ be such that $x_\ell=x_{\max}$. Thus $\sum_{i=1, i\neq \ell-1,\ell+1}^p (x_{\ell} -x_i)=x_{\ell-1}+(p-2)x_{\ell}+x_{\ell+1}-\sum_{i=1}^{p}x_i\leq (p-1)!-((p-1)!-k)$.
 Thus $\sum_{i=1, i\neq \ell-1,\ell+1}^p (x_\ell -x_i) \in \{0,1,\dots,k\}$. It follows that $|\{i \;|\; x_i< x_{\max}\}|\leq k+2$ and so  $|\mathcal{B}|\geq p-k-2$ and the second part is proved.

Let $\sum_{i=1}^p x_i=(p-1)!-k$ and suppose, for a contradiction, that $k<\lceil \frac{p}{3} \rceil -2$. So $|\mathcal{B}|\geq p-\lceil \frac{p}{3} \rceil+1$ and therefore
$$|\mathcal{A}\cap \mathcal{B}|\geq |\mathcal{A}|+|\mathcal{B}|-p\geq \lceil \frac{p}{3} \rceil+p-\lceil \frac{p}{3} \rceil+1-p\geq 1. $$ 

 Hence $\mathcal{A}\cap \mathcal{B}\neq \varnothing$ and  $x_{\max}\leq (p-1)!/p$. Since $p$ is  prime, by Wilson theorem \cite[P. 27]{wilson} $(p-1)!\equiv -1 \mod p$. Since $x_{\max}$ is integer, we have that $x_i\leq \frac{(p-1)!+1}{p}-1$ for all $i\in [p]$. Therefore $$\sum_{i=1}^px_i=(p-1)!-k\leq p(\frac{(p-1)!+1}{p}-1)=(p-1)!+1-p$$ and so $$p\leq k+1<\lceil \frac{p}{3} \rceil -1,$$ which is a contradiction. So we must have $k\geq \lceil \frac{p}{3} \rceil -2$. This completes the proof.
\end{proof}
In the following  we will prove Theorem \ref{mainresult}.\\
\noindent \textbf{ Theorem:} For all primes $p\geq 11$,   $P(p,3)\leq (p-1)!-\lceil \frac{p}{3}\rceil+2\leq (p-1)!-2$.
\begin{proof}
Let $C$ be a code in $S_p$ with minimum Kendall $\tau$-distance 3. Let $H$ be the Young subgroup of $S_p$ corresponding to the partition $\lambda:=(p-1,1)$ and $X$ be the set of  right cosets of $H$ in $S_p$. If $S=\{(i,i+1)\,|\,1\leq i\leq p-1\}$ and $T:=S\cup \{\xi\}$,  then by Lemma \ref{matrix}, $ \widehat{T^{\rho_{X}^{S_n}}} $ is a  conjugate by a permutation matrix of the matrix $M$ in Theorem \ref{systemineq}. Now Theorem \ref{intprogram} implies that the optimal value of the objective function of the following integer programming problem  gives an upper bound on $|C|$
\begin{align*}\label{max}
\text{Maximize} &\quad \sum_{i=1}^{p}{x_i},\\
\text{subject to}&\quad M(x_1,\ldots ,x_{p})^t \leq |H| \mathbf{1}=(n-1)!\mathbf{1},\\
& \quad x_i\in \mathbb{Z},\,\, x_i\geq 0, \,\, i\in\{1,\ldots,p\},
\end{align*}
where $\textbf{1}$ is a column vector of order $p\times 1$ whose  entries are equal to $1$. Therefore, the result follows from  Theorem  \ref{systemineq}. This completes the proof.
\end{proof} 
\begin{thm}
If $n$ is equal to $6$, $7$, $11$, $13$ and $17$, then $P(n,3)$ is less than or equal to $116$, $716$, $10!-10$, $12!-12$  and $16!-5$, respectively.
\end{thm}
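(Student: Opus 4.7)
The plan is to apply Theorem \ref{intprogram} separately for each $n \in \{6, 7, 11, 13, 17\}$: choose a subgroup $H \leq S_n$, construct the induced integer program, and pass it to a standard integer programming solver whose reported optimum will match the desired upper bound. By Theorem \ref{intprogram} any such optimum is an upper bound on $|C|$, and hence on $P(n,3)$, so the entire task reduces to selecting a sufficiently informative $H$ and running the solver.

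For the primes $p = 7, 11, 13, 17$, the naive choice $\lambda = (p-1, 1)$ just reproduces the matrix $M$ of Theorem \ref{systemineq} and only yields $(p-1)! - \lceil p/3 \rceil + 2$, i.e.\ $719$, $10!-2$, $12!-3$, $16!-4$, which are strictly weaker than the targets $716$, $10!-10$, $12!-12$, $16!-5$. I would therefore take $H$ to be a Young subgroup of larger index, e.g.\ corresponding to a partition of the form $(n-2, 2)$, $(n-2, 1, 1)$, or $(n-3, 2, 1)$, so that $\rho_{X}^{S_n}$ captures more of the combinatorics and cuts down the feasible polytope further. For $n = 6$ the non-primality blocks the Wilson-type argument of Theorem \ref{systemineq}, so once again a richer partition is required to reach $5! - 4$. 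In each case the matrix $\widehat{T^{\rho_{X}^{S_n}}}$ of size $m \times m$ with $m = n!/|H|$ is assembled via Lemma \ref{coset1}, whose $(i,j)$ entry equals $|T \cap a_i^{-1} H a_j|$ for $T = S \cup \{\xi\}$, and the resulting constraint system $\widehat{T^{\rho_{X}^{S_n}}}\mathbf{x} \leq |H|\mathbf{1}$ with $\mathbf{x}\geq 0$ integer is fed into a general-purpose branch-and-cut solver such as Gurobi or CPLEX.

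The principal obstacle is the explosive growth of $m = [S_n:H]$ as $\lambda$ is refined: for $n = 17$ with $\lambda = (15, 2)$ one already has $m = 136$, and $(15, 1, 1)$ doubles this to $272$, so bound quality must be balanced against solver tractability; a secondary issue is the efficient enumeration of the double-coset data $a_i^{-1} H a_j$ needed to fill in the matrix. Once a tractable $H$ producing the right optimum is located in each case, verification that the solver returns the objective values $116$, $716$, $10!-10$, $12!-12$, $16!-5$ for $n = 6, 7, 11, 13, 17$ respectively completes the proof.
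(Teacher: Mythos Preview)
Your plan is exactly the paper's: invoke Theorem~\ref{intprogram} with a well-chosen Young subgroup, build $\widehat{T^{\rho_X^{S_n}}}$ via Lemma~\ref{coset1} (in GAP), and hand the resulting IP to CPLEX; the paper's specific partitions are $(2,2,2)$ for $n=6$, $(5,1,1)$ for $n=7$, $(9,2)$ for $n=11$, $(11,2)$ for $n=13$, and $(16,1)$ for $n=17$. One small correction: for $p=17$ the coarse choice $\lambda=(p-1,1)$ \emph{does} suffice---the value $(p-1)!-\lceil p/3\rceil+2$ from Theorem~\ref{systemineq} is only an analytic upper bound on the IP optimum, and when the solver is actually run on the $17\times 17$ system it returns $16!-5$, so no refinement is needed there.
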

\begin{proof}
Let $S:=\{(i,i+1)\,|\,1\leq i\leq n-1\}$. In view of Theorem \ref{intprogram}, we have used  CPLEX software \cite{cplex} and GAP software \cite{gap} to determine the upper bound for $P(n,3)$ obtained from solving the integer programming problem corresponding to the subgroup $H$ of $S_n$, where $H$ is the Young subgroup corresponding to the partition $(2,2,2)$, when $n=6$, $(5,1,1)$, when $n=7$, $(9,2)$, when $n=11$, $(11,2)$, when $n=13$  and $(16,1)$, when $n=17$. For each of the above subgroups, Using GAP software \cite{gap},  first, we  determined the matrix $\widehat{(T)^{\rho_{X}^{S_{n}}}}$, where  $X$ is the set of  right cosets of $H$ in $S_n$ and  $T:=S\cup \{\xi\}$, then using  CPLEX software \cite{cplex}, we solved the integer programming problem corresponding to the subgroup $H$. 
\end{proof}
 To prove the non-existence of 1-perfct codes in $S_{14}$ and $S_{15}$, we are using  techniques in \cite{white} which is stated in the following proposition.
\begin{prop}\label{white}
\cite[Theorem 2.2]{white} Let $S=\{(i,i+1)\,|\,1\leq i\leq n-1\}$ and $T:=S\cup \{\xi\}$. If $S_n$ contains a subgroup $H$ such that $n\nmid |H|$ and $\widehat{(T)^{\rho_{X}^{S_{n}}}}$ is invertible, where $X$ is the set of  right cosets of $H$ in $S_n$, then  $S_n$ contains no  $1$-perfect codes.
\end{prop}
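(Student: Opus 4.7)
The plan is to argue by contradiction: suppose $C\subseteq S_n$ is a $1$-perfect code and extract from the hypotheses on $H$ an integrality obstruction that forces $n\mid|H|$. First I would apply the remark following the definition of perfect codes with radius $r=1$ to write
\[
\widehat{T}\,\widehat{C}=\widehat{S_n}
\]
in the group algebra $\mathbb{C}[S_n]$, and then push this identity through the extended algebra homomorphism $\hat\rho$ attached to the coset representation $\rho:=\rho_X^{S_n}$. Setting $A:=\widehat{T^\rho}$ and $M_C:=\widehat{C^\rho}$, this yields the $|X|\times|X|$ matrix equation $A\,M_C=\widehat{S_n^{\rho}}$. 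By Lemma~\ref{coset1} the $(i,j)$-entry of $\widehat{S_n^{\rho}}$ equals $|S_n\cap a_i^{-1}Ha_j|=|H|$ (since $a_i^{-1}Ha_j\subseteq S_n$), so $\widehat{S_n^{\rho}}=|H|J$ with $J=\mathbf{1}\mathbf{1}^{t}$ the all-ones matrix.

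Next, invoking the hypothesis that $A$ is invertible, I would invert to obtain
\[
M_C=|H|\,A^{-1}J=|H|\bigl(A^{-1}\mathbf{1}\bigr)\mathbf{1}^{t},
\]
which is a rank-one matrix whose columns are all equal to $|H|A^{-1}\mathbf{1}$. To compute this vector I would check that $A\mathbf{1}=n\mathbf{1}$: by Lemma~\ref{coset1} the $i$-th row of $A$ sums to $\sum_{j}|T\cap a_i^{-1}Ha_j|$, and for fixed $i$ the sets $\{a_i^{-1}Ha_j\}_j$ are pairwise disjoint with union $S_n$ (since $\{Ha_j\}_j$ partitions $S_n$), so this row sum equals $|T\cap S_n|=|T|=n$. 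Hence $A^{-1}\mathbf{1}=\tfrac{1}{n}\mathbf{1}$, and every entry of $M_C$ is forced to equal $|H|/n$.

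Finally, Lemma~\ref{coset1} also guarantees that the entries of $M_C$ are non-negative \emph{integers}, so $|H|/n\in\mathbb{Z}$, i.e.~$n\mid|H|$, contradicting the hypothesis. The whole argument is short once the framework of Section~\ref{sec2} is in hand; the step that does the real work is the rank-one observation combined with the identification of $n$ as the eigenvalue of $A$ on $\mathbf{1}$, which is where the invertibility hypothesis is actually used. After that, the divisibility obstruction is automatic from the integrality of coset intersections, and no further case analysis on $H$ is needed.
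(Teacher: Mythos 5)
Your proof is correct. Note that the paper itself offers no proof of this proposition --- it is imported verbatim as \cite[Theorem 2.2]{white} --- so there is nothing internal to compare against; what you have written is a valid, self-contained derivation using only the machinery of Section~\ref{sec2}. Every step checks out: the identity $\widehat{T}\,\widehat{C}=\widehat{S_n}$ is exactly the remark following the definition of $r$-perfect codes with $r=1$; Lemma~\ref{coset1} gives $\widehat{S_n^{\rho}}=|H|J$ and the integrality of the entries of $M_C$; and the row-sum computation $A\mathbf{1}=n\mathbf{1}$ is justified correctly, since for fixed $i$ the sets $a_i^{-1}Ha_j$ are the images of the partition $\{Ha_j\}_j$ under the bijection $g\mapsto a_i^{-1}g$ and hence partition $S_n$, giving row sum $|T|=n$. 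The one remark worth making is that your route is more elementary than the one in the cited source, which works with the spectrum of the symmetric group and the decomposition of the coset representation into irreducibles; your argument needs only invertibility of $A$, the eigenvector $\mathbf{1}$, and the integrality of coset intersection counts, which makes it a good fit for the framework the paper has already set up (indeed it is essentially Theorem~\ref{intprogram} specialized to the case where the inequality becomes an equality and $A$ is invertible).
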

\begin{thm}
There are no $1$-perfect codes under the Kendall $\tau$-metric in $S_n$ when $n\in\{14,15\}$.
\end{thm}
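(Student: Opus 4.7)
The plan is to invoke Proposition~\ref{white}: for each $n\in\{14,15\}$ exhibit a subgroup $H\leq S_n$ with $n\nmid |H|$ such that the $|X|\times |X|$ integer matrix $\widehat{T^{\rho_X^{S_n}}}$ is invertible, where $X$ is the set of right cosets of $H$ in $S_n$ and $T=S\cup\{\xi\}$. By Lemma~\ref{coset1} the $(i,j)$ entry of this matrix is $|T\cap a_i^{-1}Ha_j|$, so each row has at most $|T|=n$ nonzero entries, which is crucial for making the computation tractable.

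The first step is the choice of $H$. For $n=14=2\cdot 7$, since $k!$ is even for every $k\geq 2$, the only way to achieve $14\nmid |H|$ from a Young subgroup $S_\lambda$ is to kill the prime $7$ in $|H|$, i.e.\ to require every part $\lambda_i\leq 6$. A natural candidate is $\lambda=(6,6,2)$, giving $|H|=(6!)^2\cdot 2=1{,}036{,}800$ and the manageable index $|X|=84{,}084$. For $n=15=3\cdot 5$ the cheapest constraint is to kill the prime $5$, forcing all parts $\lambda_i\leq 4$; the Young subgroup attached to $\lambda=(4,4,4,3)$ has $|H|=82{,}944$ and $|X|=15{,}765{,}750$. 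Should this index prove impractical, a non-Young subgroup of $S_{15}$ with $5\nmid |H|$ and smaller index may be sought instead.

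With $H$ fixed the argument reduces to a computer-algebra verification that closely parallels the method used earlier in the paper for $n\in\{6,7,11,13,17\}$: build the sparse matrix $\widehat{T^{\rho_X^{S_n}}}$ in GAP from a set of coset representatives using Lemma~\ref{coset1}, then certify $\det\neq 0$. Because exact rational determinants of such large sparse matrices are expensive, the practical route is to reduce modulo a small prime $q$ and to check that the reduction is invertible over $\mathbb{F}_q$; invertibility of a mod-$q$ reduction forces invertibility over $\mathbb{Z}$, and Proposition~\ref{white} then excludes $1$-perfect codes in $S_n$.

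The main obstacle is computational rather than conceptual. The divisibility condition $n\nmid |H|$ is easy to enforce, but there is no a priori guarantee that any given choice of $H$ produces an invertible matrix, so several partitions may have to be tried. The harder of the two cases is $n=15$, where the prime factorization of $n$ forces $|X|$ to be large for every Young subgroup with $5\nmid |H|$, so the real difficulty lies in locating a subgroup that is simultaneously of manageable index and nondegenerate under $\rho_X^{S_{15}}$; the $n=14$ case should go through with $\lambda=(6,6,2)$ or a close variant without difficulty.
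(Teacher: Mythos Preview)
Your plan for $n=14$ is exactly what the paper does: the Young subgroup attached to $\lambda=(6,6,2)$ has $14\nmid |H|$, and a direct software check confirms that the $84{,}084$-dimensional matrix $\widehat{T^{\rho_X^{S_{14}}}}$ is invertible.

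For $n=15$, however, the paper takes a different and sharper route. Like you, it settles on the Young subgroup for $\lambda=(4,4,4,3)$, but rather than attacking the full coset matrix directly (or hunting for a smaller-index non-Young subgroup), it exploits the decomposition of the permutation module $M^\lambda$ into irreducibles: by \cite[Corollary~2.2.22]{JK}, $\widehat{T^{\rho_X^{S_{15}}}}$ is invertible provided $\widehat{T^{\rho_\mu}}$ is invertible for every irreducible representation $\rho_\mu$ indexed by a partition $\mu$ dominating $\lambda$. There are $54$ such partitions, and the associated matrices have dimension at most $175{,}175$, so each is easily certified by software. Your mod-$q$ sparse-rank idea is sound in principle, but at roughly $1.6\times 10^{7}$ rows it is far heavier than the $54$ block checks; the representation-theoretic decomposition is the key device that makes $n=15$ tractable, and it is the ingredient missing from your outline.
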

\begin{proof}
Let $S=\{(i,i+1)\,|\,1\leq i\leq n-1\}$ and $T:=S\cup \{\xi\}$. By Proposition \ref{white}, to prove the non-existence of $1$-perfect codes under the Kendall $\tau$-metric in $S_n$,
we need to consider the young subgroups $H$ of $S_n$, $n\in\{14,15\}$, with two properties: (1) $n\nmid |H|$; (2) the matrix $\widehat{(T)^{\rho_{X}^{S_{n}}}}$  is invertible. Since $\widehat{(T)^{\rho_{X}^{S_{n}}}}$ is a matrix of dimension $n!/|H|$,  by choosing $H$ with a larger size, the dimension of the matrix $\widehat{(T)^{\rho_{X}^{S_{n}}}}$ decreases.
 In the case $n=14$, we consider the  Young subgroup $H$  corresponding to the partition $(6,6,2)$. It is clear that $14\nmid |H|=6!6!2!$. Also, by a software check the matrix $\widehat{(T)^{\rho_{X}^{S_{14}}}}$ which is a matrix of dimension 84084 is invertible and so there are no $1$-perfect codes under the Kendall $\tau$-metric in $S_{14}$. In the case $n=15$, the largest Young subgroup $H$ of $S_{15}$ which satisfies the condition (1) is the Young subgroup corresponding to the partition $\lambda:=(4,4,4,3)$. In this case the matrix $\widehat{(T)^{\rho_{X}^{S_{15}}}}$ is of dimension $1051050$ that the software was unable to check its invertibility and so we used Theorem \cite[Corollary 2.2.22]{JK} to check its invertibility.   By \cite[Corollary 2.2.22]{JK}, if for all partitions $\mu$ of $n$ which $\mu\unlhd \lambda$, $\widehat{T^{\rho_{\mu}}}$ are invertible, where $\rho_{\mu}$ is the irreducible representation of $S_{15}$ corresponding to $\mu$, then $\widehat{T^{\rho_{X}^{S_{15}}}}$ is invertible.  There exist 54 partitions of 15 which dominates the partition $\lambda$. By software check, for each partition $\mu$ of these 54 partition the matrix $\widehat{T^{\rho_{\mu}}}$  is invertible (Table \ref{t1} shows the dimension and the eigenvalue with smallest absolute value of theses martices) and so $\widehat{(T)^{\rho_{X}^{S_{15}}}}$ is invertible  and this completes the proof.
\end{proof}
\begin{con}\label{con}
If $H$ is the Young subgroup corresponding to the partition $(p-1,p-1,2)$ of $S_{2p}$, where $p\geq 3$ is a prime number,  and $X$ is the set of  right cosets of $H$ in $S_{2p}$, then $\widehat{(S\cup \{\xi\})^{\rho_{X}^{S_{2p}}}}$ is invertible. In particular,  there is no $1$-perfect permutation code of length $2p$ with respect to the Kendall $\tau$-metric.
\end{con}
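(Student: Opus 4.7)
My plan is to reduce the conjecture to an invertibility check on each irreducible component of the permutation module $M^\lambda := \mathbb{C}[S_{2p}/H]$, in direct analogy with the $S_{15}$ argument already carried out in the paper. First, the second assertion follows from the first via Proposition \ref{white}, provided one also verifies $2p \nmid |H|$. Here $|H| = 2\,(p-1)!^2$, and primality of $p$ gives $p \nmid (p-1)!$, hence $p \nmid |H|$ and therefore $2p \nmid |H|$. So the whole conjecture collapses to the invertibility of $\widehat{T^{\rho_X^{S_{2p}}}}$ with $T = S \cup\{\xi\}$.

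Next, applying \cite[Corollary 2.2.22]{JK} (Young's rule), $M^\lambda \cong \bigoplus_{\mu \unrhd \lambda} K_{\mu\lambda}\, S^\mu$, so it suffices to show that for every partition $\mu$ of $2p$ with $\mu \unrhd \lambda = (p-1, p-1, 2)$ the matrix $\widehat{T^{\rho_\mu}} = I + \sum_{i=1}^{2p-1} s_i^{\rho_\mu}$ is invertible, where $s_i = (i,i+1)$. The dominance conditions force $\mu$ to have at most three parts, with $\mu_3 \le 2$ and $\mu_1 \ge p-1$, so $\mu = (a,b,c)$ with $a+b+c=2p$, $a \ge b \ge c$, $c \in \{0,1,2\}$ and $a \ge p-1$. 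A direct count gives exactly $3p-2$ such shapes, organized by the value of $c$.

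To establish invertibility on each $S^\mu$, I would work in Young's orthogonal (seminormal) form, in which every $s_i^{\rho_\mu}$ is a real symmetric matrix whose entries are dictated by the axial distances in standard Young tableaux. Then $\widehat{T^{\rho_\mu}}$ is symmetric with real spectrum, and invertibility is equivalent to $-1 \notin \mathrm{spec}\bigl(\sum_i s_i^{\rho_\mu}\bigr)$. The one-dimensional case $\mu=(2p)$ is immediate ($1+\sum s_i = 2p$). For two-row shapes $\mu=(2p-k,k)$ the matrix $\widehat{T^{\rho_\mu}}$ is tridiagonal in a natural tableau ordering, and one should extract an explicit determinant or continued-fraction formula to rule out a zero eigenvalue. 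The three-row shapes $\mu=(a,b,c)$ with $c\in\{1,2\}$ are the substantive cases, and here I would use the branching rule $S^\mu\downarrow S_{2p-1}$ to induct on $n$, reducing to smaller partitions that dominate $(p-2,p-1,2)$ or $(p-1,p-1,1)$.

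The main obstacle will be the three-row case with $c=2$, where the Specht modules are high-dimensional and the matrix $I+\sum s_i$ has no obvious global structure. I expect the primality of $p$ to intervene here in a way parallel to the Wilson theorem step of Theorem \ref{systemineq}: the determinant of $\widehat{T^{\rho_\mu}}$ is a rational integer with a denominator controlled by hook lengths, and a $p$-adic valuation argument should forbid it from vanishing. A complementary strategy is to exploit the $\mathbb{Z}/2$-symmetry swapping the two parts of size $p-1$ in $\lambda$, which induces a block decomposition of $\widehat{T^{\rho_X^{S_{2p}}}}$ into $\pm 1$-eigenspaces and may cut the three-row analysis in half. The conjecture being sharp at primes (and likely false for general $2n$) strongly suggests that such a number-theoretic divisibility step must be combined with the representation-theoretic reduction above.
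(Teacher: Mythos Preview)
The statement you are attempting to prove is labeled as a \emph{Conjecture} in the paper, not a theorem; the paper offers no proof whatsoever, only the remark that a software check confirms it for $p\in\{3,5,7\}$. So there is no ``paper's own proof'' to compare against, and your proposal is in fact an attack on an open problem rather than a reconstruction of an existing argument.

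That said, your reduction steps are sound and mirror exactly what the paper does in the $S_{15}$ case: the implication from invertibility to nonexistence of a $1$-perfect code via Proposition~\ref{white} is correct (your divisibility check $2p\nmid 2((p-1)!)^2$ is fine), and the decomposition of the permutation module via \cite[Corollary 2.2.22]{JK} into Specht modules $S^\mu$ with $\mu\unrhd(p-1,p-1,2)$ is precisely the intended route. Your enumeration of the $3p-2$ dominating shapes is also correct.

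Where the proposal stops being a proof is exactly where you yourself flag it: for the three-row shapes with $c=2$ (and already for the two-row shapes) you do not actually establish that $-1$ avoids the spectrum of $\sum_i s_i^{\rho_\mu}$. The tridiagonal claim for two-row $\mu$ is not literally true in seminormal form once $k\geq 2$, and no determinant formula is produced. The suggested $p$-adic/hook-length mechanism and the $\mathbb{Z}/2$-symmetry block decomposition are heuristics, not arguments; in particular there is no reason given why the determinant, an integer in the orthogonal form only up to a denominator, should have a controllable $p$-adic valuation, nor why primality of $p$ enters at the spectral level. In short, you have correctly identified the same reduction the authors would use, but the substantive invertibility statement for each $S^\mu$ remains entirely open in your write-up, just as it does in the paper.
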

We note  that by software checking Conjecture \ref{con} holds valid for $p\in\{3,5,7\}$.
\begin{longtable}{cccc}
\caption{The dimention and the eigenvalue with smallest absolute  value of the matrix $\widehat{T^{\rho_{\mu}}}$  for all partitions $\mu$ of 15 which dominates the partition (4,4,4,3).}\label{t1}\\
item & Partition& Dimension & Eigenvalue \\\hline
1& (15)& 1 & 1.5$ \times 10 $\\
2 &(14, 1)& 14& 1.104$ \times 10 $\\
3& (13, 2)& 90& 7.232\\
4& (13, 1, 1)& 91 & 7.217\\
5& (12, 3)& 350& 3.686\\
6& (12, 2, 1)& 715& 3.619\\
7& (11, 4)& 910& 5.467$ \times 10^{-1} $\\
8 &(8, 7)& 1430 &-8.095$ \times 10^{-3} $\\
9& (10, 5)& 1638& -2.611$ \times 10^{-3} $\\
10 &(11, 2, 2)& 1925& 3.149$ \times 10^{-1} $\\
11 &(9, 6)& 2002 &-4.503$ \times 10^{-3} $\\
12 &(11, 3, 1)& 2835& 3.745$ \times 10^{-1} $\\
13 &(7, 7, 1)& 5005& 1.035$ \times 10^{-2} $\\
14 &(5, 5, 5)& 6006 &-1.497$ \times 10^{-3} $\\
15 &(10, 4, 1)& 7007& -7.028$ \times 10^{-3} $\\
16 &(10, 3, 2)& 9100& 1.12$ \times 10^{-2} $\\
17 &(9, 5, 1)& 11375 & 2.99$ \times 10^{-3} $\\
18 &(8, 6, 1)& 11583& -4.224$ \times 10^{-3} $\\
19 &(9, 3, 3)& 12740 &-4.444$ \times 10^{-3} $\\
20 &(9, 2, 2, 2)& 13650& -1.825$ \times 10^{-4} $\\
21 &(6, 6, 3)& 21450& -$1.728 \times 10^{-6}$\\
22 &(9, 4, 2)& 22113& 9.626$ \times 10^{-5} $\\
23 &(4, 4, 4, 3)& 24024& -8.294$ \times 10^{-4} $\\
24 &(7, 6, 2)& 25025& -2.424$ \times 10^{-4} $\\
25 &(7, 4, 4)& 25025& 1.108$ \times 10^{-3} $\\
26 &(6, 5, 4)& 30030& 1.033$ \times 10^{-5} $\\
27 &(8, 5, 2)& 32032& 8.217$ \times 10^{-4} $\\
28 &(8, 4, 3)& 35035& -2.925$ \times 10^{-4} $\\
29 &(7, 5, 3)& 45045& 3.142$ \times 10^{-5} $\\
30 &(6, 3, 3, 3)& 50050& 3.128$ \times 10^{-5} $\\
31 &(8, 3, 2, 2)& 58968& 3.477$ \times 10^{-4} $\\
32 &(5, 4, 3, 3)& 75075& -1.733$ \times 10^{-4} $\\
33 &(5, 4, 4, 2)& 81081& -5$ \times 10^{-5} $\\
34 &(7, 3, 3, 2)& 90090& 2.1$ \times 10^{-5} $\\
35 &(5, 5, 3, 2)& 96525& -5.987$ \times 10^{-5} $\\
36 &(6, 5, 2, 2)& 100100& 2.946$ \times 10^{-5} $\\
37 &(7, 4, 2, 2)& 112112& -6.787$ \times 10^{-5} $\\
38 &(6, 4, 3, 2)& 175175& -3.594$ \times 10^{-5} $\\
39 &(12, 1, 1, 1)& 364 & 3.599\\
40 &(11, 2, 1, 1)& 2925& 2.852$ \times 10^{-1} $\\
41 &(10, 2, 2, 1)& 9450& -1.485$ \times 10^{-4} $\\
42 &(10, 3, 1, 1)& 11088& 5.085$ \times 10^{-3} $\\
43 &(9, 4, 1, 1)& 25025& -1.767$ \times 10^{-4} $\\
44 &(7, 6, 1, 1)& 27027& 3.757$ \times 10^{-4} $\\
45 &(8, 5, 1, 1)& 35100& -7.440$ \times 10^{-5} $\\
46 &(9, 3, 2, 1)& 42042& 6.633$ \times 10^{-4} $\\
47 &(6, 6, 2, 1)& 50050& -1.680$ \times 10^{-5} $\\
48 &(5, 5, 4, 1)& 54054& 1.934$ \times 10^{-4} $\\
49 &(8, 3, 3, 1)& 57330& 9.513$ \times 10^{-5} $\\
50 &(6, 4, 4, 1)& 80080 &-2.972$ \times 10^{-5} $\\
51 &(8, 4, 2, 1)& 91000& -2.590$ \times 10^{-5} $\\
52 &(7, 5, 2, 1)& 108108& -3.672$ \times 10^{-5} $\\
53 &(6, 5, 3, 1)& 128700& $-1.920 \times 10^{-5} $\\
54 &(7, 4, 3, 1)& 135135 & $-2.627 \times 10^{-6}$
\end{longtable}
\section{Conclusion}
Due to the applications of PCs under the Kendall $\tau$-metric in flash memories, they have attracted the attention of many researchers. In this paper, we consider the upper bound of  the size of the largest PC with minimum Kendall
$\tau$-distance 3. Using group theory, we formulate an integer programming problem that is depend on a non-trivial subgroup of $S_n$ of choice, where the optimal value of the objective function  gives an upper bound on $P(n,3)$.  After that, by solving the integer programming problem corresponding to some subgroups of $S_n$, when $n\geq 7$ is a prime number or $n\in\{6,14,15\}$, we  improve the upper bound on $P(n,3)$.

\section{Declarations}

\subsection{Ethical Approval and Consent to participate}

Not applicable.

The current manuscript does not report on or involve the use of any animal or human data or tissue. 

\subsection{Consent for publication}

Not applicable.

The current manuscript does not contain data from any individual person. 

\subsection{Availability of supporting data}

All data generated or analysed during this study are included in this published article. 

\subsection{Competing interests}

The authors declare that they have no competing interests.

\subsection{Funding}

This research is supported by the Deputy of Research and Technology of University of Isfahan under a grant given to the research group CSG (Code-Scheme-Group). 
F. Parvaresh is also supported by IPM in part by grant No. 1401680050.

These funding sources had no role in the design of this study and will not have any role during its execution, analyses, interpretation of the data, or decision to submit results.

\subsection{Authors' contributions}

 All authors read and approved the final manuscript.

\end{document}